\theoremstyle{plain}
\newtheorem{theorem}{\bf Theorem}[section]
\newtheorem*{theorem*}{\bf Theorem}
\newtheorem*{lemma*}{\bf Lemma}
\newtheorem*{proposition*}{\bf Proposition}
\theoremstyle{remark}
\newtheorem{conjecture}{\bf Conjecture}
\newtheorem*{conjecture*}{\bf Conjecture}
\theoremstyle{remark}
\numberwithin{equation}{section}
\def\N{{\mathbb N}}
\def\Z{{\mathbb Z}}
\newcommand{\BO}{\mathnormal O}
\title{ Sign Changes of the Liouville function on Quadratics}
\author{Peter Borwein, Stephen K.K. Choi and Himadri Ganguli}
\address{Department of Mathematics, Simon Fraser University, Burnaby, British Columbia, CANADA V5C 1S6.}
\email{pborwein@sfu.ca}
\email{kkchoi@math.sfu.ca}
\email{hganguli@sfu.ca}
\thanks{Research of first and second  authors are supported by NSERC of Canada.}
\keywords{Liouville Function, Chowla's Conjecture, Prime Number Theorem, Binary Sequences, Changes sign Infinitely Often, Quadratic Polynomials, Pell Equations}
\subjclass{Primary 11N60, 11B83, 11D09}
\date{\today}
\begin{document}
\maketitle
\begin{abstract}
Let $\lambda (n)$ denote the Liouville function. Complementary to the prime number theorem, Chowla conjectured
that
\vspace{1mm}

\noindent {\bf Conjecture (Chowla).} {\em
\begin{equation}
\label{a.1}
\sum_{n\le x} \lambda (f(n)) =o(x)
\end{equation}
for any polynomial $f(x)$ with integer coefficients which is not of form $bg(x)^2$. }
\vspace{1mm}

\noindent The prime number theorem is
equivalent to \eqref{a.1} when $f(x)=x$. Chowla's conjecture is proved for linear functions but for the degree
greater than 1, the conjecture seems to be extremely hard and still remains wide open. One can consider a weaker form
of Chowla's conjecture, namely,
\vspace{1mm}

\noindent {\bf Conjecture 1 (Cassaigne, et al).} {\em If $f(x) \in \Z [x]$ and is not in the form of $bg^2(x)$
for some $g(x)\in \Z[x]$, then $\lambda (f(n))$ changes sign infinitely often.}

\vspace{1mm}

\noindent Clearly, Chowla's conjecture implies Conjecture 1. Although it is weaker, Conjecture 1 is still wide open for polynomials of degree $>1$. In this article, we study Conjecture 1 for the
quadratic polynomials. One of our main theorems is

\vspace{1mm}

\noindent {\bf Theorem 1.} {\em Let $f(x) = ax^2+bx +c $ with $a>0$ and $l$ be a positive integer such that $al$ is not a perfect square. Then if the equation $f(n)=lm^2 $ has one solution $(n_0,m_0) \in \Z^2$, then it has infinitely many positive solutions $(n,m) \in \N^2$.}

\vspace{1mm}

\noindent As a direct consequence of Theorem 1, we prove

\vspace{1mm}

\noindent {\bf Theorem 2.} {\em Let $f(x)=ax^2+bx+c$ with $a \in \N$ and $b,c \in \Z$. Let
\[
A_0=\left[ \frac{|b|+(|D|+1)/2}{2a}\right]+1.
\]
Then the binary sequence $\{ \lambda (f(n)) \}_{n=A_0}^\infty$ is either a constant sequence or it changes sign infinitely often.}

\vspace{1mm}

\noindent Some partial results of Conjecture 1 for quadratic polynomials are also proved by using Theorem 1.
\end{abstract}

\section{Introduction}

Let $\lambda(n)$ denote the Liouville function, i.e, $\lambda(n) = (-1)^{\Omega(n)}$, where
$\Omega(n)$ denotes the number of prime factors of $n$ counted with multiplicity. Alternatively, $\lambda (n)$ is
the completely multiplicative function defined by $\lambda (p)=-1$ for each prime. Let $\zeta (s)$ denote the Riemann zeta function, defined for complex $s$ with $\Re (s) > 1$ by
\[
\zeta (s):=\sum_{n=1}^\infty \frac{1}{n^s} =\prod_{p}\left( 1-\frac{1}{p^s} \right)^{-1}
\]
where the product is over all prime numbers $p$.
Thus
\begin{equation}
\label{a.1.1}
\frac{\zeta (2s)}{\zeta (s)} = \prod_{p}\left( 1+\frac{1}{p^s} \right)^{-1}
   =\prod_{p}\left( 1-\frac{\lambda (p) }{p^s} \right)^{-1}  = \sum_{n=1}^\infty \frac{\lambda (n)}{n^s}.
\end{equation}
   Let $L(x)$ denote the average of the values of $\lambda (n)$ up to $x$,
\[
L(x):=\sum_{1\le n \le x}\lambda (n)
\]
so that $L(x)$ records the difference of the number of positive integers up to $x$ with an even number of prime
factors (counted with multiplicity) and those with an odd number. P\'{o}lya in 1919 showed in \cite{Po} that
the Riemann Hypothesis, i.e., every non-trivial zeros of $\zeta (s)$ are on the critical line $\Re (s)=1/2$,
will follow if $L(x)$ does not change sign for sufficiently large $n$. There is a vast amount  of literature
about the study of the sign change of $L(x)$. Until 1958, Haselgrove proved that $L(x)$ changes sign infinitely often
in \cite{HA}. For more discussion about this problem, we refer the reader to \cite{BFM}.

It is well known that the prime number theorem is equivalent to
\begin{equation}
\label{1.1}
L(x)=\sum_{n\le x} \lambda (n) =o(x).
\end{equation}
In fact, the prime number theorem is equivalent to that fact that $\zeta (s) \neq 0$ on the vertical line $\Re (s)=1$; and this is equivalent to \eqref{1.1} in view of \eqref{a.1.1}.
Complementary to the prime number theorem, Chowla \cite{Chowla} made the following conjecture
\begin{conjecture}[Chowla]
\label{chowlaconj}
Let $f(x)\in \Z [x]$ be any polynomial which is not of form $bg^2(x)$ for some $b\neq 0, g(x) \in \Z[x]$. Then
\begin{equation}
\label{1.2}
\sum_{n \leq x} \lambda(f(n))=o(x).
\end{equation}
\end{conjecture}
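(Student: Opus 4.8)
The plan is to attack \eqref{1.2} by first stripping away the square factors and then confronting the parity of $\Omega(f(n))$. Since $\lambda$ is completely multiplicative and $\lambda(k^2 m)=\lambda(m)$, I would write $f=b\prod_i f_i^{e_i}$ with the $f_i$ distinct irreducible primitive polynomials, so that $\lambda(f(n))=\lambda(b)\prod_i \lambda(f_i(n))^{e_i}$; the factors with $e_i$ even contribute trivially, so only the radical made of the $f_i$ with $e_i$ odd survives. The hypothesis that $f$ is not of the form $bg^2$ guarantees this radical is a nonconstant squarefree polynomial $f_0$, and it therefore suffices to prove $\sum_{n\le x}\lambda(f_0(n))=o(x)$. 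When $\deg f_0=1$, the values $f_0(n)$ run through an arithmetic progression, so this reduces to cancellation of $\lambda$ over progressions, which follows from the non-vanishing of $\zeta(s)$ and the Dirichlet $L$-functions on $\Re(s)=1$; the linear case of Chowla is thus nothing but the prime number theorem.

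For the genuinely new range $\deg f_0\ge 2$ I would try the analytic route, forming the Dirichlet series $F(s)=\sum_{n\ge 1}\lambda(f_0(n))\,n^{-s}$ and hoping to continue it past $\Re(s)=1$ with no pole there, from which \eqref{1.2} would follow by a Tauberian theorem exactly as in the derivation of $L(x)=o(x)$. The immediate difficulty is that, unlike $\sum_n \lambda(n)n^{-s}=\zeta(2s)/\zeta(s)$, the series $F$ carries no Euler product, because $\lambda(f_0(n))$ is not multiplicative in $n$ once $\deg f_0\ge 2$. To recover some multiplicative structure I would instead analyze, for each prime $p$, how $p$ divides the values $f_0(n)$ — equivalently the splitting of $p$ in the number field $K=\Q[x]/(f_0)$ — and attempt to encode $\Omega(f_0(n))\bmod 2$ through Frobenius data governed by the Chebotarev density theorem, combining the contribution of small prime factors (controllable by equidistribution) with that of the large prime factor, where $f_0(n)$ is often nearly prime.

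The hard part, and what I expect to be the decisive obstacle, is controlling $\Omega(f_0(n))\bmod 2$ itself: this is precisely the \emph{parity problem} of sieve theory. Chebotarev equidistribution and the Matom\"aki--Radziwi\l\l\ theorem give strong control of the small prime factors and of averages of $\lambda$ in short intervals, yet they are provably unable to separate integers with an even number of prime factors from those with an odd number, and the values of a degree $\ge 2$ polynomial are too sparse and too poorly understood multiplicatively for the entropy decrement argument (which yields logarithmically averaged Chowla when $f_0$ is linear) to reach them. I therefore expect the full cancellation \eqref{1.2} to stay beyond current methods whenever $\deg f_0\ge 2$, which is exactly why one retreats to the weaker assertion that $\lambda(f(n))$ merely changes sign infinitely often; the present paper extracts that weaker conclusion for quadratics from the Pell-type Theorem~1, rather than from any estimate of the form \eqref{1.2}.
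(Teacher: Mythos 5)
This statement is Chowla's Conjecture, and it is precisely that: a conjecture. The paper contains no proof of it --- it is stated, attributed to Chowla, and explicitly described as ``extremely hard and still wide open'' for degrees greater than one --- so there is no paper proof to compare against, and your refusal to manufacture a complete argument is the correct response. What you do provide is accurate: the reduction $\lambda(f(n))=\lambda(c)\,\lambda(f_0(n))$ to the squarefree kernel $f_0$ (valid for all but finitely many $n$, after fixing signs so the relevant values are positive) is legitimate, and the hypothesis that $f$ is not of the form $bg^2$ is exactly what makes $f_0$ nonconstant; the linear case does follow from the non-vanishing of $\zeta(s)$ and the Dirichlet $L$-functions on $\Re(s)=1$, which is consistent with the paper's remark that \eqref{1.2} with $f(x)=x$ is equivalent to the prime number theorem; and the parity problem is indeed the recognized obstruction once $\deg f_0\ge 2$. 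Your closing remark also matches the paper's actual strategy: rather than attempting any estimate of the shape \eqref{1.2}, the authors retreat to the weaker Conjecture \ref{casconj} (sign changes only) and prove quadratic cases of it through the Pell-equation result, Theorem \ref{thm 2.2}.

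One small inaccuracy worth flagging: the entropy decrement argument of Tao gives the \emph{logarithmically averaged two-point} Chowla conjecture, i.e.\ the case where $f_0$ is a product of two distinct linear factors; when $f_0$ itself is linear the statement is already the prime number theorem in arithmetic progressions, no entropy decrement needed. This does not affect your conclusion, which stands: the conjecture as stated remains open, and neither you nor the paper proves it.
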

Clearly, Chowla's conjecture is equivalent to the prime number theorem when $f(x)=x$. For polynomials of degree
$>1$, Chowla's conjecture seems to be extremely hard and still remains wide open. One can consider a weaker form
of Chowla's conjecture, namely,
\begin{conjecture}[Cassaigne et al.]
\label{casconj}
If $f(x) \in  \Z[x]$ is not of form $bg^2(x)$, then $\lambda (f(n))$ changes sign infinitely often.
\end{conjecture}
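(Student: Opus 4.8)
The plan is to prove the quadratic case of the statement — the first case beyond the known linear one, and the case this paper is built around — by feeding the Diophantine propagation of Theorem~1 into the dichotomy of Theorem~2. Write $f(x)=ax^2+bx+c$; after replacing $f$ by $-f$ (and $\lambda(f(n))$ by $\lambda(|f(n)|)$) if necessary, I may assume $a>0$, so that $f(n)>0$ for all $n\ge A_0$ and $\lambda(f(n))$ is genuinely defined there. By Theorem~2 the sequence $\{\lambda(f(n))\}_{n\ge A_0}$ is either a constant sequence or changes sign infinitely often; in the latter case there is nothing to prove. Hence the whole problem reduces to showing that, when $f$ is not of the form $bg(x)^2$, this sequence cannot be eventually constant.

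The engine is the completely multiplicative identity $\lambda(lm^2)=\lambda(l)\lambda(m)^2=\lambda(l)$, valid for all positive integers $l,m$. Suppose $f(n_0)=lm_0^2$ for some $(n_0,m_0)\in\Z^2$ and some positive integer $l$ with $al$ not a perfect square. Then Theorem~1 supplies infinitely many positive solutions $(n,m)\in\N^2$ of $f(n)=lm^2$, and for each of them $\lambda(f(n))=\lambda(l)$. Thus a single representation $f(n_0)=l\cdot(\text{square})$ forces $\lambda(f(n))=\lambda(l)$ for arbitrarily large $n$. Consequently, if I can exhibit two positive integers $l_1,l_2$, each with $al_i$ not a perfect square and each represented by $f$ in the form $l_i\cdot(\text{square})$, with $\lambda(l_1)=+1$ and $\lambda(l_2)=-1$, then both values $+1$ and $-1$ are attained by $\lambda(f(n))$ for arbitrarily large $n$. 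This contradicts eventual constancy and so, via Theorem~2, finishes the proof.

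The remaining task — and the step I expect to be the genuine obstacle — is to produce the two representations $f(n_i)=l_i m_i^2$ with $\lambda(l_1)\ne\lambda(l_2)$. To search for them I would complete the square: since $4a\,f(x)=(2ax+b)^2-D$ with $D=b^2-4ac$, the equation $f(n)=lm^2$ is equivalent to the generalized Pell equation $X^2-4al\,m^2=D$ with $X=2an+b$, so the admissible $l$ are exactly those arising from the classes represented by this form. One then needs to control the parity $\lambda(l)$ as $l$ ranges over the admissible set. This is precisely where the difficulty of Conjecture~\ref{casconj} lives: Theorem~1 guarantees that once a squarefree part $l$ occurs it recurs infinitely often, but it says nothing about \emph{which} $l$ occur nor about the sign of $\lambda(l)$ on them, and resolving this in full generality seems to demand arithmetic input (sieve-theoretic or analytic) beyond what is available here. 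For specific families of $f$, however, the two representations can be exhibited explicitly — for instance one $n_0$ with $f(n_0)=l_1\cdot(\text{square})$ and $\lambda(l_1)=+1$ (when $a$ is not a perfect square this may be taken to be a square value, $l_1=1$), together with one $n_0$ giving $f(n_0)=l_2\cdot(\text{square})$ for an explicit $l_2$ with $\lambda(l_2)=-1$ — after which Theorem~1 closes the argument. This is the mechanism behind the partial results for quadratics, and the inability to carry it out uniformly in $f$ is exactly the gap separating those partial results from the full conjecture.
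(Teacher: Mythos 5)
There is a genuine gap here, and you have in fact named it yourself. The statement you were asked to prove is Conjecture~\ref{casconj}, which the paper does not prove and explicitly describes as wide open for degree greater than one. Your proposal reconstructs, quite faithfully, the paper's actual machinery: the dichotomy of Theorem~\ref{thm 2.3} and the propagation result Theorem~\ref{thm 2.2}, including the reduction via completing the square to the Pell-type equation $X^2-4al\,Y^2=D$. But this machinery only \emph{reduces} the quadratic case of the conjecture to the problem of exhibiting two representations $f(n_1)=l_1m_1^2$ and $f(n_2)=l_2m_2^2$ with $n_1,n_2\ge A_0$, $al_1,al_2$ not perfect squares, and $\lambda(l_1)\neq\lambda(l_2)$. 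That exhibition step \emph{is} the conjecture: nothing in your proposal (or in the paper) supplies it for a general quadratic, and the paper only carries it out for special families, e.g.\ monic $f$ admitting a value with $\lambda(f(n_0))=-1$ (via the identity $f(n)f(n+1)=f(f(n)+n)$) or prime leading coefficient with $D$ a nonzero perfect square. Since your argument terminates in an acknowledged inability to produce $l_1,l_2$ uniformly in $f$, it is a correct reduction followed by an open problem, not a proof.

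A second, independent limitation: the conjecture concerns all $f\in\Z[x]$ not of the form $bg^2(x)$, while your approach is confined to degree $2$ from the outset. This restriction is not repairable within the present framework: as the paper remarks after Theorem~\ref{thm 2.2}, the propagation of one solution of $f(n)=lm^2$ to infinitely many fails for degree $\ge 3$ by Siegel's theorem on integral points, so the Pell-equation engine has no higher-degree analogue. To be clear, the reasoning you do give is sound and coincides with the paper's own partial results (your dichotomy-plus-two-representations scheme is precisely the proof of Theorem~\ref{thm 2.3} combined with Theorem~\ref{thm 2.2}); the gap is that the task was to prove the full statement, and what you have --- like what the paper has --- stops at exactly the obstruction you identify: controlling the sign of $\lambda$ on the set of admissible $l$ represented by the form $X^2-4al\,Y^2=D$.
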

Clearly, Chowla's conjecture implies Conjecture \ref{casconj}.  In fact, suppose it is not true, i.e., there is $n_0$ such that $\lambda (f(n)) =\epsilon$ for all $n \ge n_0$ where $\epsilon$ is either $-1$ or $+1$. Then it follows that
\[
\sum_{n\le x} \lambda (f(n)) = \epsilon x + \BO (1)
\]
which contradicts \eqref{1.2}.

Although it is weaker, Conjecture \ref{casconj} is still wide open for polynomials of degree $>1$. In \cite{Cas}, Conjecture  \ref{casconj} for special polynomials have been studied and some partial results are proved.

\begin{theorem}[Cassaigne et al.]
\label{cas1}
Let $f(n) = (an+b_{1})(an+b_{2})\dots(an+b_{k})$  where $a,k \in \N $, $b_{1}, \ldots,b_{k}$ are distinct integers with $b_{1}\equiv \ldots \equiv b_{k} \pmod a$  then  $\lambda(f(n))$ changes sign infinitely often.
\end{theorem}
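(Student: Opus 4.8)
The plan is to argue by contradiction: assuming $\lambda(f(n))=\epsilon$ is eventually constant, I will show this forces the single $\pm1$-sequence $u(m):=\lambda(am+b_1)$ to be eventually periodic, which I then rule out using Dirichlet's theorem on primes in arithmetic progressions. The first step is a structural reduction. Relabel so that $b_1=\min_i b_i$. Since $b_i\equiv b_1\pmod a$, each $c_i:=(b_i-b_1)/a$ is a nonnegative integer, the $c_i$ are distinct with $c_1=0$, and writing $an+b_i=a(n+c_i)+b_1$ collapses the product into shifts of one progression:
\[
\lambda(f(n))=\prod_{i=1}^{k} u(n+c_i).
\]
Moreover $\gcd(a,b_i)=\gcd(a,b_1)=:g$ for every $i$, so dividing out $g$ only multiplies $\lambda(f(n))$ by the fixed sign $\lambda(g)^k$; hence I may assume $\gcd(a,b_1)=1$, so that $am+b_1$ represents infinitely many primes.

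Next I would translate the hypothesis into a linear recurrence over $\mathbb{F}_2$. Writing $u(m)=(-1)^{v(m)}$ with $v(m)\in\mathbb{F}_2$, eventual constancy $\prod_i u(n+c_i)=\epsilon$ becomes $\sum_{i=1}^{k} v(n+c_i)=\delta$ in $\mathbb{F}_2$ for all $n\ge N$, where $(-1)^\delta=\epsilon$. Because $c_k=\max_i c_i$ is strictly larger than the other exponents, I can solve for the top index, $v(n+c_k)=\delta+\sum_{i<k} v(n+c_i)$, which presents $v$ as the forward orbit of a fixed affine map on the finite state space $\mathbb{F}_2^{c_k}$. Any orbit of a self-map of a finite set is eventually periodic, so $v$, and hence $u$, is eventually periodic: $u(m+T)=u(m)$ for all $m\ge M_0$, with $T\ge1$.

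The final step contradicts this periodicity through primes. Choose $m_0\ge M_0$ with $p:=am_0+b_1$ prime and $p>aT$ (possible since $\gcd(a,b_1)=1$); periodicity then gives $\lambda(p+jaT)=u(m_0+jT)=u(m_0)=-1$ for every $j\ge0$. On the other hand, since $\gcd(1,aT)=1$ there are infinitely many primes $\ell\equiv1\pmod{aT}$, say $\ell=1+saT$ with $s\ge1$; taking $j=ps$ yields $p+jaT=p(1+saT)=p\ell$, so $\lambda(p+jaT)=\lambda(p)\lambda(\ell)=(-1)(-1)=+1$, contradicting the value $-1$ forced above. Thus $u$ is not eventually periodic, the initial assumption fails, and $\lambda(f(n))$ changes sign infinitely often.

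I expect the crux to be the middle step: recognizing that the hypothesis $b_1\equiv\cdots\equiv b_k\pmod a$ is precisely what turns a product of $k$ distinct linear forms into shifts of a \emph{single} progression, so that eventual constancy of $\lambda(f(n))$ is an honest $\mathbb{F}_2$-linear recurrence and therefore forces periodicity of $u$. Once this reduction is secured, the prime-injection argument is routine. Two smaller points I would take care with are the reduction to $\gcd(a,b_1)=1$ (verifying that the common divisor $g$ is shared by all factors) and the degenerate case $k=1$, where the pattern is $\{0\}$ and "$u$ eventually constant" is simply the trivial instance of eventual periodicity, which the prime-injection step already contradicts.
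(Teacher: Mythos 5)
Your proof is correct, and the comparison here is unusual: the paper does not prove this theorem at all --- its ``proof'' consists of the single sentence ``This is Corollary 2 in \cite{Cas}'' --- so your argument stands as a self-contained substitute for an external citation rather than an alternative to an internal argument. Each of your three steps checks out. The congruence hypothesis does exactly what you claim: writing $b_i=b_1+ac_i$ turns every factor into $a(n+c_i)+b_1$, so $\lambda(f(n))=\prod_i u(n+c_i)$ with $u(m)=\lambda(am+b_1)$ (for $n$ large enough that all factors are positive, which is all you need), and since $b_i\equiv b_1\pmod a$ forces $\gcd(a,b_i)=\gcd(a,b_1)=g$ for every $i$, complete multiplicativity lets you divide out $g$ at the cost of the fixed sign $\lambda(g)^k$, legitimizing the reduction to $\gcd(a,b_1)=1$. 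The $\mathbb{F}_2$-recurrence step is the real idea: eventual constancy of the product is an affine recursion whose state ranges over the finite set $\mathbb{F}_2^{c_k}$, so $u$ is eventually periodic, including correctly in the degenerate case $k=1$. The prime-injection step then kills periodicity; note that your side condition $p>aT$ is never actually used, since $\lambda(p\ell)=\lambda(p)\lambda(\ell)$ holds by complete multiplicativity with no coprimality or distinctness needed. It is worth observing that this closing step is precisely the device the paper itself uses at the start of Section 2 to prove the linear case (that $\lambda(an+b)$ changes sign infinitely often): pick a prime in the progression and multiply by a prime $\equiv 1$ modulo the relevant modulus. In effect, your middle step reduces the general product to that linear statement, so your writeup both recovers the cited result of \cite{Cas} and ties it to machinery already present in the paper, with Dirichlet's theorem as the only outside input.
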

\begin{proof}
This is Corollary 2 in \cite{Cas}.
\end{proof}

For certain quadratic polynomials, they proved
\begin{theorem}[Cassaigne et al.]
\label{cas2}
If $f(n) = (n+a)(bn+c)$ where  $a ,c \in \Z ,\ b \in \N , ab \neq c $ then  $\lambda(f(n))$ changes sign infinitely often.
\end{theorem}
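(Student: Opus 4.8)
The plan is to use Theorem~1 as a bootstrap that upgrades a single attained value of $\lambda(f(n))$ into infinitely many. Writing $f(n)=(n+a)(bn+c)=bn^2+(ab+c)n+ac$, the leading coefficient is $b$. If some $n_0$ satisfies $\lambda(f(n_0))=\varepsilon$ and $f(n_0)=l\,m_0^2$ with $l$ squarefree and $bl$ not a perfect square, then Theorem~1 (applied with leading coefficient $b$ and this $l$) produces infinitely many positive $(n,m)$ with $f(n)=l\,m^2$, and for all of these $\lambda(f(n))=\lambda(l)=\varepsilon$. Hence it suffices to exhibit one $n_+$ with $\lambda(f(n_+))=+1$ and one $n_-$ with $\lambda(f(n_-))=-1$, each lying in a squarefree class $l\neq b_0$, where $b_0$ is the squarefree part of $b$ (the unique class for which $bl$ is a square). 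As supporting structure, note that a fixed square class contributes only $O(\log x)$ of the integers $n\le x$, since its values of $n$ obey a Pell recursion and therefore grow geometrically; as finitely many such classes cannot exhaust the integers, $f$ represents infinitely many distinct square classes, giving ample room to avoid the class $b_0$.

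First I would dispatch the value $+1$. The polynomial has the integer root $n=-a$, so $f(-a)=0=1\cdot 0^2$ is a solution of $f(n)=1\cdot m^2$; feeding the seed $(n_0,m_0)=(-a,0)$ into Theorem~1 with $l=1$ (admissible precisely when $b$ is not a perfect square) yields infinitely many positive $n$ with $f(n)=m^2$, whence $\lambda(f(n))=+1$ infinitely often. When $b$ is a perfect square this seed is inadmissible, and one must instead locate a $+1$-value in some class $l>1$; I would then treat it on the same footing as the $-1$-value below.

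The production of a $-1$-value is the main obstacle, and it is morally the whole theorem: once it is in hand the bootstrap finishes the proof. Equivalently, one must rule out that $\lambda(f(n))$ is eventually constant. Setting $u=n+a$ and $d=ab-c$ (nonzero because $ab\neq c$), the identity $bn+c=b(n+a)-d$ turns eventual constancy $\lambda(f(n))\equiv\varepsilon$ into the functional equation
\[
\lambda(bu-d)=\varepsilon\,\lambda(u)\qquad\text{for all large }u.
\]
When $b=1$ this gives $\lambda(u-d)=\varepsilon\lambda(u)$, hence $\lambda(u-2d)=\lambda(u)$, so $\lambda$ is eventually periodic; but $\lambda$ is not eventually periodic, since periodicity modulo $T$ would force $\lambda(kT)=\lambda(k)\lambda(T)$ to be constant in $k$, making $\lambda$ itself eventually constant, which is false (for instance $\lambda(p)=-1$ while $\lambda(p^2)=+1$). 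This already settles every case with $b=1$.

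For $b\ge 2$ the same equation is far stiffer. Substituting $u=|d|w$ reduces it to $\lambda(bw-1)=\varepsilon\lambda(w)$ (or $\lambda(bw+1)=\varepsilon\lambda(w)$ if $d<0$), and iterating along $w=b^{k}$ gives $\lambda(b^{m}-1)=\varepsilon\,\lambda(b)^{m-1}$ for all large $m$. Refuting this amounts to pinning down the parity of $\Omega(b^{m}-1)$, which the purely multiplicative identities do not deliver; the elementary manipulations keep collapsing the problem onto the uncontrolled factorizations of $b^m\mp 1$ or of sparse quadratic sequences. It is exactly here that a genuine new idea is needed, and I would import the argument of Cassaigne et al.\ to supply a single admissible $n^{\ast}$ with $\lambda(f(n^{\ast}))=-1$; alternatively, staying in the Pell spirit of the present paper, one could search directly among the infinitely many represented square classes for one with odd $\Omega$ and with $bl$ nonsquare, and then invoke Theorem~1 to conclude.
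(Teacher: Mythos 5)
First, a point of comparison: the paper does not actually prove this statement --- its ``proof'' is the single line ``This is Theorem 4 in \cite{Cas}'' --- so there is no internal argument to match yours against. Judged on its own, your proposal has a genuine gap, and it is exactly the one you flag yourself: you never produce a single admissible $n$ with $\lambda(f(n))=-1$ (nor a $+1$ value in the case where $b$ is a perfect square). Neither of your two suggested remedies closes it. ``Importing the argument of Cassaigne et al.''\ is circular, since the statement being proved \emph{is} Theorem 4 of Cassaigne et al.; and ``searching the represented square classes for one with odd $\Omega$'' carries no guarantee of success: your $O(\log x)$-per-class count (valid here because $D=(ab-c)^2\neq 0$) shows only that infinitely many squarefree classes are represented, which yields precisely the dichotomy ``eventually constant or sign changes infinitely often''. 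That dichotomy is the paper's Theorem \ref{thm 2.3}, and it does not decide between the two alternatives --- deciding it is the whole content of the theorem. (Your elementary disposal of the case $b=1$ via eventual periodicity of $\lambda$ is correct, but covers only that case.)

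The frustrating part is that the decisive observation is already in your write-up and you stopped one step short of it: since $f(-a)=0$ and $0=l\cdot 0^{2}$, the seed $(n_{0},m_{0})=(-a,0)$ is a solution of $f(n)=lm^{2}$ for \emph{every} positive integer $l$, not only for $l=1$. Write $b=b_{1}^{2}b_{0}$ with $b_{0}$ squarefree; for squarefree $l$ the hypothesis of Theorem \ref{thm 2.2} (that $bl$ be nonsquare) says exactly $l\neq b_{0}$. So take $l_{2}=p$ any prime with $p\neq b_{0}$: Theorem \ref{thm 2.2} gives infinitely many positive $(n,m)$ with $f(n)=pm^{2}$, and all but finitely many of these have $m\geq 1$, whence $\lambda(f(n))=\lambda(p)\lambda(m)^{2}=-1$ for infinitely many $n$. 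Take $l_{1}=1$ if $b_{0}\neq 1$, and $l_{1}$ a product of two distinct primes if $b_{0}=1$: this gives $\lambda(f(n))=+1$ for infinitely many $n$. Both values occur infinitely often, so $\lambda(f(n))$ changes sign infinitely often, with no case split on $b$, no periodicity detour, and no appeal to \cite{Cas}. One caveat if you write this up: the explicit recursion in the paper's proof of Theorem \ref{thm 2.2}, run on a seed with $m_{0}=0$, produces $t=(r^{2}+16a^{3}ls^{2})t_{0}$, which has the \emph{same sign} as $t_{0}=c-ab$; when $ab>c$ these give negative $n$, so obtaining positive solutions in the required residue class needs an extra remark (e.g.\ using the reflected solutions $(-t,m)$ or automorphisms with negative leading entry). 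The \emph{statement} of Theorem \ref{thm 2.2} covers you, but since your argument leans entirely on this degenerate seed, that is the one point of rigor worth checking.
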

\begin{proof}
This is Theorem 4 in \cite{Cas}.
\end{proof}
\begin{theorem}[Cassaigne et al.]
\label{cas3}
Let $a \in \N, b,c \in \Z$, and write $f(n)=an^2+bn+c$, $D=b^2-4ac.$ Assume that $a,b$ and $c$ satisfy the following conditions :
\begin{enumerate}
\item[(i)] $2a | b$,
\item[(ii)] $D<0$,
\item[(iii)] there is a positive integer $k$ with
$$ \lambda\left( -\frac{D}{4}k^2 +1\right) = -1.$$
\end{enumerate}
Then $\lambda(f(n))$ changes sign infinitely often.
\end{theorem}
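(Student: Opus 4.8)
The plan is to reduce the statement to Theorem 1 by producing two infinite families of $n$ on which $\lambda(f(n))$ is constant with opposite signs. First I would use hypothesis (i) to complete the square: writing $b=2as$ with $s\in\Z$, one gets
\[
f(n)=a(n+s)^2+e,\qquad e:=c-as^2=-\frac{D}{4a},
\]
and hypothesis (ii) gives $e>0$, so $f(n)>0$ for every $n$ and $\lambda(f(n))$ is always defined. Set $Q:=ae=-D/4>0$, a positive integer. The single computation that drives the argument is the evaluation of $f$ along the progression $n=ej-s$:
\[
f(ej-s)=a(ej)^2+e=e\bigl(aej^2+1\bigr)=e\bigl(Qj^2+1\bigr),\qquad j\in\Z .
\]

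Two choices of $j$ supply seeds of opposite sign. Taking $j=0$ gives $f(-s)=e$, so $(-s,1)$ solves $f(n)=e\,m^2$ and $\lambda(f(-s))=\lambda(e)$. Taking $j=k$, the integer from hypothesis (iii), gives $f(ek-s)=e\,M$ with $M:=Qk^2+1=-\tfrac{D}{4}k^2+1$, so $(ek-s,1)$ solves $f(n)=(eM)m^2$, and since $\lambda(M)=-1$ we get $\lambda(f(ek-s))=-\lambda(e)$. The idea is to feed each seed to Theorem 1: with $l=e$ it yields infinitely many positive $n$ with $f(n)=e\,m^2$, hence $\lambda(f(n))=\lambda(e)$; with $l=eM$ it yields infinitely many positive $n$ with $\lambda(f(n))=\lambda(e)\lambda(M)=-\lambda(e)$. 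As these two values are opposite, $\lambda(f(n))$ changes sign infinitely often.

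The one hypothesis of Theorem 1 left to verify is that $al$ is not a perfect square, i.e. that $Q=ae$ (first seed) and $QM=aeM$ (second seed) are non-squares. Since $\lambda(M)=-1$ forces $M$ to be a non-square, $Q$ and $QM$ cannot both be perfect squares, so at least one of the two families is obtained with no further work. The main obstacle is the degenerate case in which one of $Q,QM$ happens to be a perfect square: there Theorem 1 genuinely fails, because $af(n)=\bigl(a(n+s)\bigr)^2+Q$ being a square forces $Q$ to be a difference of two squares and hence leaves only finitely many $n$. In that case I must replace the offending seed by another value $f(ej-s)=e(Qj^2+1)$ of the same sign $\lambda\bigl(e(Qj^2+1)\bigr)$ but with $Q(Qj^2+1)$ a non-square. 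Exhibiting one such $j$ — equivalently, producing a single $j$ with $\lambda(Qj^2+1)$ of the prescribed sign and $Qj^2+1$ lying off the thin Pell set on which $Q(Qj^2+1)$ becomes a square — is the delicate point where I expect the real effort to lie; once a single such $j$ is found, Theorem 1 again upgrades it to an infinite family and the sign-change conclusion follows exactly as above.
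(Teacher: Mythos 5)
Your reduction of the statement to Theorem \ref{thm 2.2} is a legitimately different route from the paper, which does not prove this theorem at all but simply cites Theorem 3 of \cite{Cas}; and your main computation is sound: with $b=2as$, $e=-D/(4a)>0$, $Q=ae=-D/4$, the evaluation $f(ej-s)=e(Qj^2+1)$, the two seeds $(-s,1)$ and $(ek-s,1)$, and the observation that $\lambda(M)=-1$ prevents $Q$ and $QM$ from both being squares are all correct. The problem is that the proposal stops exactly where the proof has to do its real work. The degenerate case you set aside is not vacuous: for $f(n)=n^2+4$ one has $Q=ae=4$, a perfect square, while hypothesis (iii) holds with $k=1$ since $\lambda(5)=-1$. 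For such $f$ your argument delivers only the family with sign $-\lambda(e)$, and the family with sign $\lambda(e)$ --- without which there is no sign change --- is deferred as ``the delicate point where I expect the real effort to lie.'' A proof that defers its crux is not a proof, so as written there is a genuine gap.

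The gap can be closed, and more easily than you fear, by two observations. First, since $\gcd(Q,M)=\gcd(Q,Qk^2+1)=1$, the product $QM$ can be a perfect square only if $Q$ and $M$ are both squares, and $M$ never is; so the seed $l=eM$ \emph{always} satisfies the hypothesis of Theorem \ref{thm 2.2}, and the only possible degeneracy is that $Q$ itself is a square, say $Q=q^2$. Second, in that case $Q+1=q^2+1$ is not a square, so the Pell equation $J^2-(Q+1)j'^2=1$ has infinitely many positive solutions, and each such solution obeys the identity
\[
(Q+1)\bigl(Qj'^2+1\bigr)=Q\bigl((Q+1)j'^2+1\bigr)+1=QJ^2+1 .
\]
If one had $\lambda(Qj^2+1)=-1$ for every $j\ge 1$, this identity would give $+1=\lambda(Q+1)\,\lambda(Qj'^2+1)=\lambda(QJ^2+1)=-1$, a contradiction; hence some $j\ge 1$ has $\lambda(Qj^2+1)=+1$. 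For that $j$ the integer $Qj^2+1=(qj)^2+1$ is not a square, hence $Q(Qj^2+1)$ is not a square, and Theorem \ref{thm 2.2} applied to $l=e(Qj^2+1)$ with seed $(ej-s,1)$ produces infinitely many positive $n$ with $\lambda(f(n))=\lambda(e)$, supplying the missing family. (Note also that your description of the bad set of $j$ as a ``thin Pell set'' is off: when $Q$ is a square, $Q(Qj^2+1)$ is a square only for $j=0$, so the genuine difficulty was never the square condition but the sign condition, and it is resolved by the identity above.)
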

\begin{proof}
This is Theorem 3 in \cite{Cas}.
\end{proof}

In this article,  we continue to study Conjecture \ref{casconj} for the quadratic case. One of our main results is Theorem \ref{thm 2.2} below. By Theorem \ref{thm 2.2}, in order to show that the sequence $\{ \lambda (f(n)) \}_{n=1}^\infty$ changes sign infinitely often, we only need find one pair of large integers $n_1$ and $n_2$ such
that $\lambda (f(n_1)) \neq \lambda (f(n_2))$. This will make the conjecture much easier to handle. Some partial results from Theorem \ref{thm 2.2} are also deduced in the next section.

\section {Main Results}

Conjecture \ref{casconj} for the linear polynomial is easily solved by the following result.
\begin{theorem}
Let $P := \{ n \in \N : \lambda(n)= +1 \} $ and $ N:= \{ n \in \N : \lambda(n) = -1 \}. $ Then both $P$ and $N$ cannot contain infinite arithmetic progression.
In particular, $\lambda (an+b)$ changes sign infinitely often in $n$.
\end{theorem}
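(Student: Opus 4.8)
The plan is to argue by contradiction, using only the complete multiplicativity of $\lambda$ together with the existence of primes in the residue class $1 \pmod d$. Since the values $\lambda(m)=+1$ and $\lambda(m)=-1$ play symmetric roles, it suffices to rule out the possibility that $P$ contains an infinite arithmetic progression; the argument for $N$ will be word for word the same with $+1$ replaced by $-1$. So I would suppose, for contradiction, that $P \supseteq \{ dk + r : k \ge 0 \}$ for some integers $d \ge 1$ and $r \ge 1$. This means precisely that
\[
\lambda(m) = +1 \qquad \text{for every } m \ge r \text{ with } m \equiv r \pmod d .
\]

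The key observation I would exploit is that multiplying a term of the progression by a prime flips the sign of $\lambda$ while, for a suitable prime, keeping us inside the progression. Concretely, choose a prime $p \equiv 1 \pmod d$; such a prime exists by Dirichlet's theorem on primes in arithmetic progressions (when $d=1$ any prime will do, and in general one may even invoke the elementary cyclotomic argument for primes $\equiv 1 \pmod d$). Let $x = r$, which is a term of the progression, so $\lambda(x)=+1$. Since $p \equiv 1 \pmod d$ we have $px \equiv x \equiv r \pmod d$ and $px \ge x = r$, so $px$ is again a term of the progression, forcing $\lambda(px)=+1$. On the other hand, complete multiplicativity gives
\[
\lambda(px) = \lambda(p)\,\lambda(x) = (-1)(+1) = -1 ,
\]
a contradiction. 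Hence $P$ contains no infinite arithmetic progression, and the identical argument shows the same for $N$.

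Finally, the last assertion follows immediately. Fix $a \in \N$ and $b \in \Z$, and note $an+b \to \infty$, so $\lambda(an+b)$ is defined for all large $n$. If $\lambda(an+b)$ did not change sign infinitely often it would be eventually constant, i.e.\ there would exist $n_0$ and $\epsilon \in \{+1,-1\}$ with $\lambda(an+b) = \epsilon$ for all $n \ge n_0$. But then $\{ an + b : n \ge n_0 \}$ is an infinite arithmetic progression contained entirely in $P$ (if $\epsilon=+1$) or in $N$ (if $\epsilon=-1$), contradicting the first part. I expect the main (indeed only) obstacle to be the input needed to locate a prime in the class $1 \pmod d$; once that is granted, every remaining step is forced by the identity $\lambda(pm) = -\lambda(m)$, so this is where the modest weight of the argument sits.
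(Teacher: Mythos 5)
Your proof is correct and is essentially the paper's own argument: both multiply a term of the progression by a prime $p \equiv 1 \pmod d$ (the paper writes $p = lm+1$ and takes the term $n_0 + l(mn_0) = pn_0$, you take $pr$), so that the product stays in the progression while complete multiplicativity flips the sign of $\lambda$, giving the contradiction. The deduction of the ``in particular'' clause is likewise the same immediate consequence.
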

\begin{proof}
We claim that both P and N cannot contain any infinite arithmetic progression.
Suppose not and there are an $n_{0}$ and $l$ such that
\begin{equation}
\lambda(n_{0}+lk)=\lambda(n_{0})
\end{equation}
for $k=0,1,2,\dots$. Pick a prime $p$  which is of the form $lm+1$. Now put $k=mn_{0}$ and consider
$$ \lambda(n_{0}+lk)=\lambda(n_{0}+ lmn_{0})=\lambda(n_{0})\lambda(lm+1)=\lambda(n_{0})\lambda(p)=-\lambda(n_{0}).$$
This contradicts (1). Hence our claim is attained.
\end{proof}

One of the main results in this paper is the following theorem.
\begin{theorem}
\label{thm 2.2}
Let $f(x) = ax^2+bx +c $ with $a>0$ and $l$ be a positive integer such that $al$ is not a perfect square. Then if the equation $f(n)=lm^2 $ has one solution $(n_0,m_0) \in \Z^2$, then it has infinitely many positive solution $(n,m) \in \N^2$.
\end{theorem}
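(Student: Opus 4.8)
The plan is to reduce the equation to a Pell-type norm equation and then exploit the infinitude of solutions of the ordinary Pell equation. First I would multiply $f(n) = lm^2$ through by $4a$ and complete the square: writing $X = 2an + b$ and $Y = m$, the equation becomes
\[
X^2 - N Y^2 = D, \qquad N := 4al, \quad D := b^2 - 4ac .
\]
Since $al$ is not a perfect square, neither is $N = 4al$, so $N$ is a positive non-square and $\sqrt N$ is irrational. The given solution $(n_0, m_0)$ furnishes an integer solution $(X_0, Y_0) = (2an_0 + b,\, m_0)$ of this equation. Conversely, any integer solution $(X, Y)$ with $X \equiv b \pmod{2a}$, $X > b$ and $Y > 0$ yields a positive solution $(n,m) = \bigl((X-b)/(2a),\, Y\bigr) \in \N^2$ of the original equation. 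So it suffices to produce infinitely many such $(X,Y)$.

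Second I would invoke the theory of the Pell equation. Because $N$ is a positive non-square, $u^2 - Nv^2 = 1$ has a fundamental solution $(u_1, v_1)$ with $u_1, v_1 > 0$, and $\eta := u_1 + v_1\sqrt N > 1$. Setting
\[
X_k + Y_k\sqrt N = (X_0 + Y_0 \sqrt N)\,\eta^k, \qquad k = 0, 1, 2, \ldots,
\]
multiplicativity of the norm $x^2 - Ny^2 = (x+y\sqrt N)(x - y\sqrt N)$ gives $X_k^2 - N Y_k^2 = D$ for every $k$, and the explicit recurrence $X_{k+1} = u_1 X_k + Nv_1 Y_k$, $Y_{k+1} = v_1 X_k + u_1 Y_k$ shows $(X_k, Y_k) \in \Z^2$. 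Passing to conjugates gives $X_k - Y_k\sqrt N = (X_0 - Y_0\sqrt N)\eta^{-k}$. Now $X_0 + Y_0\sqrt N = 0$ would force $X_0 = Y_0 = 0$ (by irrationality of $\sqrt N$) and hence $D = 0$, a degenerate case in which $f(x)=a(x+b/(2a))^2$ admits no positive solution of $f(n)=lm^2$ since $al$ is not a square; setting this aside, we have $X_0 + Y_0\sqrt N \neq 0$, and after replacing $(X_0,Y_0)$ by $(-X_0,-Y_0)$ if necessary (still a solution) we may assume $X_0 + Y_0\sqrt N > 0$. Then as $k \to \infty$ the dominant term $\eta^k$ forces $X_k \to +\infty$ and $Y_k \to +\infty$.

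The main obstacle is the congruence constraint: to recover an integer $n$ I need $X_k \equiv b \pmod{2a}$, which does not hold for every $k$. I would resolve this with a periodicity argument. The linear map $(X, Y) \mapsto (u_1 X + Nv_1 Y,\, v_1 X + u_1 Y)$ has matrix
\[
M = \begin{pmatrix} u_1 & Nv_1 \\ v_1 & u_1 \end{pmatrix}, \qquad \det M = u_1^2 - N v_1^2 = 1,
\]
so $M$ is invertible modulo $2a$ and the residue sequence $(X_k \bmod 2a,\, Y_k \bmod 2a)$ is purely periodic, say with period $T$. Hence $(X_{jT}, Y_{jT}) \equiv (X_0, Y_0) \pmod{2a}$ for all $j \ge 0$, and in particular $X_{jT} \equiv 2an_0 + b \equiv b \pmod{2a}$. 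Along this subsequence $n_{jT} := (X_{jT} - b)/(2a)$ is an integer and $m_{jT} := Y_{jT}$; since $X_{jT}, Y_{jT} \to +\infty$, for all large $j$ we have $n_{jT} \ge 1$ and $m_{jT} \ge 1$, while the strict growth makes these solutions pairwise distinct. This produces infinitely many positive solutions $(n,m) \in \N^2$, completing the argument.
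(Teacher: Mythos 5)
Your construction is, at bottom, the same mechanism as the paper's proof: the paper's parametrization by solutions $(r,s)$ of $r^2-16a^3ls^2=1$ is exactly multiplication of $t_0+2m_0\sqrt{al}$ by the square of the unit $r+4as\sqrt{al}$, a choice rigged so that the congruences $t\equiv t_0$, $m\equiv m_0 \pmod{2a}$ hold automatically, whereas you use the fundamental unit together with a periodicity argument in $\mathrm{GL}_2(\Z/2a\Z)$; that part of your write-up (norm multiplicativity, pure periodicity of residues, growth of $X_k,Y_k$) is fine and arguably cleaner than the paper's. The genuine gap is the sign adjustment. Replacing $(X_0,Y_0)$ by $(-X_0,-Y_0)$ changes the residue of $X_0$ from $b$ to $-b\pmod{2a}$, so your subsequent claim that $X_{jT}\equiv 2an_0+b\equiv b\pmod{2a}$ is false after the replacement unless $a\mid b$. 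The flip that is harmless is $(X_0,Y_0)\mapsto(X_0,-Y_0)$, which fixes $X_0$ and hence the congruence; since $(X_0+Y_0\sqrt N)(X_0-Y_0\sqrt N)=D$, this rescues the argument when $D<0$ (the two conjugates have opposite signs, so one is positive), and no flip is needed when $X_0=2an_0+b>0$ (for $D>0$ both conjugates are then positive). But when $D>0$ and $X_0=2an_0+b<0$, both conjugates are negative, and no sign change compatible with the congruence class of $b$ lands you in an orbit whose $X$-coordinates tend to $+\infty$.

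This gap cannot be filled, because the statement itself fails in exactly that case; your proof had to break somewhere. Take $f(x)=3x^2+x-1$ and $l=1$, so $a=3$, $b=1$, $D=13$, $N=12$, and $al=3$ is not a square. Completing the square gives $(6n+1)^2-12m^2=13$, and $(n_0,m_0)=(-1,1)$ is an integer solution. Here the fundamental Pell solution is $(u_1,v_1)=(7,2)$, and multiplication by $7+2\sqrt{12}$ sends $X$ to $7X+24Y\equiv X\pmod 6$, so $X\bmod 6$ is constant along every orbit; the standard bounds on fundamental solutions show every solution of $X^2-12Y^2=13$ is $\pm(5\pm\sqrt{12})(7+2\sqrt{12})^k$. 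Consequently every solution with $X>0$ has $X\equiv 5\pmod 6$, and every solution with $X\equiv 1\pmod 6$ (the class needed for integrality of $n$) has $X<0$: the equation $3n^2+n-1=m^2$ has infinitely many integer solutions, $n=-1,-2,-10,-25,\ldots$, but none with $n\ge 1$. (Your ``set aside'' degenerate case $D=0$ is likewise a genuine counterexample when $2a\mid b$, e.g. $f(x)=x^2$, $l=2$, $(n_0,m_0)=(0,0)$.) For what it is worth, the paper's own proof has the identical defect: it asserts without justification that its construction yields infinitely many positive $t\equiv t_0\pmod{2a}$, which is false whenever $D>0$ and $t_0<0$, as one can check by computing the two sign choices of its formula for $t$. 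With your flip corrected to $(X_0,-Y_0)$, your argument does prove the theorem under the additional hypothesis that $D<0$ or that the given solution satisfies $2an_0+b>0$; that strengthened hypothesis is the most that can be salvaged.
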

\begin{proof}
Let $D=b^2-4ac$ be the discriminant of $f(x)$. By solving the quadratic equation
\begin{equation}
\label{2}
an^2+bn+c=lm^2,
\end{equation}
for $n$ we get
$$ n_0= {-b \pm \sqrt{b^2-4a(c-lm_0^2)} \over 2a } = {-b \pm \sqrt{D+4alm_0^2} \over 2a }.$$
It follows that $D+4alm_0^2=t_0^2$ for some integer $t_0$. By choosing a suitable sign of $t_0$, we may assume
\begin{equation}
\label{2.0}
t_0\equiv b \pmod {2a}, \ \ \ \  \mbox{ and } \ \ \ \ n_0=\frac{-b+t_0}{2a}\in \Z.
\end{equation}
This leads us to consider the diophantine equation
\begin{equation}
\label{2.1}
t^2=4alm^2+D.
\end{equation}
Suppose that $(t_0,m_0)$ and $(t,m)$ are solutions of \eqref{2.1}. Then we have
\begin{equation*}
t^2=4alm^2+D
\end{equation*}
and
\begin{equation*}
t_{0}^2=4alm_{0}^2+D.
\end{equation*}
Subtracting the above two equations, we get
\begin{equation}
(t-t_{0})(t+t_{0})=l(m-m_{0})(4am+4am_{0}).
\end{equation}
We now let $s$ and $r$ be
\begin{equation}
\label{2.2}
r(m-m_{0})=2as(t+t_{0})  \  , \   2as(4alm+4alm_{0})=r(t-t_{0}).
\end{equation}
By eliminating the terms $t$ and $m$ respectively in \eqref{2.2}, we get
\begin{equation}
\label{2.3}
(r^2-16a^3ls^2)m=r^2m_{0}+16a^3ls^2m_{0}+4arst_{0}
\end{equation}
and
\begin{equation}
\label{2.4}
(r^2-16a^3ls^2)t=r^2t_{0} +16a^2ls^2m_{0}+16a^3s^2lt_{0}.
\end{equation}
Note that by our assumption, $16a^3l$ is not a perfect square.
So the Pell equation
\begin{equation}
\label{2.5}
r^2-16a^3ls^2=1
\end{equation}
always have infinitely many solutions $(r,s) \in \Z^2$, and for each solution $(r,s)$ of the Pell equation \eqref{2.5} gives integers $m$ and $t$ through \eqref{2.3} and \eqref{2.4} such that
\begin{equation*}
m=r^2m_{0}+16a^3ls^2m_{0}+4arst_{0}
\end{equation*}
and
\begin{equation*}
t=r^2t_{0}+16a^2lsrm_{0}+16a^3s^2lt_{0}.
\end{equation*}
One can easily verify that if $(r,s)\neq (\pm 1,0)$ then these $m$ and $t$ satisfy the equations \eqref{2.2} and hence satisfy equation
\eqref{2.1}. Note that $r^2 \equiv 1 \pmod{2a}$ and $r(m-m_{0}) \equiv 0 \pmod{2a}$. Hence we have $m \equiv
m_{0} \pmod{2a}$ and $t \equiv t_{0} \pmod{2a}$ by \eqref{2.2}. Since there are infinitely many solutions $(r,s) \in \Z^2$ of
the Pell equation \eqref{2.5} and these will give infinitely many solutions $(m,t) \in \Z^2$
of the equation \eqref{2.2}. In particular, there are infinite many positive  integers $t$ such that $t\equiv t_0\pmod {2a}$ and
\[
n= {-b + \sqrt{D+4alm^2} \over 2a } = {-b + t \over 2a }
\]
to be a positive integer by \eqref{2.0}. Therefore, there are infinitely many positive solutions $(n,m) \in \N^2$ of \eqref{2}.
This completes the proof of the theorem.
\end{proof}

It is worth to mention that one should not expect Theorem \ref{thm 2.2} is true for polynomials of higher degree because there may only have finitely many integer solutions by Siegel's theorem on integral points in \cite{Si}.

In view of Theorem \ref{thm 2.2}, to determine the conjecture is true for a given quadratic polynomial $f(x)$, we only need to find one pair of positive integers $n_1$ and $n_2$ such that $\lambda (f(n_1)) \neq \lambda (f(n_2))$. This gives us the following theorem.

\begin{theorem}
\label{thm 2.3}
Let $f(x)=ax^2+bx+c$ with $a \in \N$ and $b,c \in \Z$. Let
\[
A_0=\left[ \frac{|b|+(|D|+1)/2}{2a}\right]+1.
\]
Then the binary sequence $\{ \lambda (f(n)) \}_{n=A_0}^\infty$ is either a constant sequence or it changes sign infinitely often.
\end{theorem}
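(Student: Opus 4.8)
The plan is to prove the dichotomy in the form: assuming the sequence $\{\lambda(f(n))\}_{n\ge A_0}$ changes sign only finitely often, i.e. is eventually constant, I will show it is in fact constant on all of $[A_0,\infty)$. First I record two facts valid for $n\ge A_0$. Using $\sqrt{|D|}\le (|D|+1)/2$, the larger real root $\frac{-b+\sqrt D}{2a}$ of $f$ (when $D\ge 0$) lies strictly below $A_0$, and $n\ge A_0$ forces $2an+b>0$; hence $f$ is strictly increasing and positive on $[A_0,\infty)$, so $\lambda(f(n))$ is well defined there.

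The heart of the argument is to guarantee that Theorem \ref{thm 2.2} is always applicable. Factor $a=a_1a_2^2$ with $a_1$ squarefree (the squarefree kernel of $a$), and for each $n$ write $f(n)=l\,m^2$ with $l$ squarefree, so that $\lambda(f(n))=\lambda(l)$. Since the product of two squarefree numbers is a perfect square only when they are equal, $al$ is a perfect square precisely when $l=a_1$; thus the hypothesis of Theorem \ref{thm 2.2} fails only when $f(n)=a_1m^2$. I claim this is impossible for $n\ge A_0$ when $D\ne 0$. Multiplying $f(n)=a_1m^2$ by $4a$ and using $4aa_1=(2a_1a_2)^2$ turns it into $(2an+b)^2-(2a_1a_2m)^2=D$, a factorization $(X-Y)(X+Y)=D$ with $X=2an+b>0$ and $Y=2a_1a_2m\ge 0$. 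Inspecting the integer factorizations of the nonzero integer $D$ shows that $X$ is maximized by the factorization using $\pm 1$, giving $X=2an+b\le (|D|+1)/2$, hence $n\le \frac{|b|+(|D|+1)/2}{2a}<A_0$. Consequently, for every $n\ge A_0$ the squarefree part $l$ of $f(n)$ is different from $a_1$, so $al$ is never a perfect square.

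With applicability secured, the main step is short. Suppose $\lambda(f(n))=\epsilon$ for all $n\ge N$ (some $N\ge A_0$, $\epsilon\in\{\pm1\}$), but that the sequence is not constant on $[A_0,\infty)$; then the value $-\epsilon$ must occur somewhere below $N$, so pick $n_0\in[A_0,N)$ with $\lambda(f(n_0))=-\epsilon$ and write $f(n_0)=l_0m_0^2$ with $l_0$ squarefree. By the previous paragraph $al_0$ is not a perfect square, so Theorem \ref{thm 2.2} yields infinitely many positive solutions $(n,m)$ of $f(n)=l_0m^2$. Only finitely many of these have $n<N$, so infinitely many satisfy $n\ge N$, and each gives $\lambda(f(n))=\lambda(l_0)=\lambda(f(n_0))=-\epsilon$, contradicting $\lambda(f(n))=\epsilon$ for $n\ge N$. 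Hence no such $n_0$ exists and the sequence is constant on $[A_0,\infty)$.

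Finally, the degenerate case $D=0$ is handled separately, since the bounding step used $D\ne 0$. When $D=0$ one has $4af(n)=(2an+b)^2$, so $a_1f(n)$ is a perfect square and $\lambda(f(n))=\lambda(a_1)=\lambda(a)$ for all $n\ge A_0$; this lands in the ``constant'' alternative. The main obstacle throughout is precisely the perfect-square case $al=\square$, where Theorem \ref{thm 2.2} provides no solutions; the definition of $A_0$ is engineered so that the sole troublesome representation $f(n)=a_1m^2$ is forced below $A_0$, after which Theorem \ref{thm 2.2} applies to every squarefree part that arises.
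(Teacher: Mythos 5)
Your proposal is correct and takes essentially the same approach as the paper: the core of both arguments is the observation that a representation $f(n)=lm^2$ with $al$ a perfect square forces $(X-Y)(X+Y)=D$ for integers $X=2an+b$, $Y$, hence $X\le(|D|+1)/2$ and $n<A_0$, after which Theorem \ref{thm 2.2} supplies infinitely many $n$ carrying the ``wrong'' sign. The only differences are cosmetic (you argue by contrapositive and invoke Theorem \ref{thm 2.2} once rather than twice), and your explicit treatment of the case $D=0$ and of the positivity of $f(n)$ on $[A_0,\infty)$ actually fills in small details that the paper's divisor argument leaves implicit.
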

\begin{proof}
Suppose $\{ \lambda (f(n)) \}_{n=A_0}^\infty$ is not a constant sequence. Then there are positive integers
$n_1\neq n_2 \ge A_0$ such that $\lambda (f(n_1)) \neq \lambda (f(n_2))$. Hence there are positive integers
$l_1,l_2$ and $m_1,m_2$ such that
\begin{equation}
\label{3.3}
\lambda (l_1)=+1, \ \ \ \ \mbox{ and } \ \ \ \ \lambda (l_2)=-1,
\end{equation}
and
\[
f(n_1)=l_1m_1^2, \ \ \ \ \mbox{ and } \ \ \ \ f(n_2)=l_2m_2^2.
\]
We claim that $al_1$ and $al_2$ are not perfect squares. If $al_j=N^2$ is a perfect square, then the diophantine equation $t^2=D+4al_jm^2$ has only finitely many solutions $(t,m)$. In fact,  since $(t_j-2Nm_j)(t_j+2Nm_j)=D$, so
there is $d\neq 0$ such that $t_j+2Nm_j=D/d$ and $t_j-2Nm_j=d$. It follows that $2t_j=D/d +d$. Thus,
\[
|t_j| \le\frac12 \left( \frac{|D|}{|d|} +|d| \right) \le \frac{|D|+1}{2}.
\]
Since $f(n_j)=l_jm_j^2$, so
\[
n_j = \left| \frac{-b \pm \sqrt{D+4al_jm_j}}{2a}\right|
\le  \frac{|b|+|t_j|}{2a}\le \frac{|b|+(|D|+1)/2}{2a}< A_0.
\]
This contradicts $n_j \ge A_0$. Therefore from Theorem \ref{thm 2.2}, there are infinitely many $n_1$ and $n_2$
such that $\lambda (f(n_1)) \neq \lambda (f(n_2))$ and hence $\lambda (f(n))$ changes sign infinitely often.
\end{proof}
As we remarked above, one should not expect Theorem \ref{thm 2.3} to be true for polynomials of higher degree.

We prove some partial results of special quadratic polynomials.
\begin{theorem}
Let $f(n)=n^2+bn+c$ with $b,c \in \Z$ . Suppose  there exists a positive integer $n_{0} \ge A_0$ (with $a=1$) such that $\lambda(f(n_{0})) = -1 $. Then $\lambda(f(n))$ changes sign infinitely often.
\end{theorem}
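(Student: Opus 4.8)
The plan is to use Theorem \ref{thm 2.3} to reduce the statement to a single sign computation. Since $a=1$ here, Theorem \ref{thm 2.3} says that $\{\lambda(f(n))\}_{n=A_0}^\infty$ is either constant or changes sign infinitely often, so it suffices to rule out the constant case. Because we are handed $n_0\ge A_0$ with $\lambda(f(n_0))=-1$, the only possible constant value is $-1$; hence I only need to derive a contradiction from the assumption that $\lambda(f(n))=-1$ for \emph{every} $n\ge A_0$ (equivalently, to exhibit one index $n\ge A_0$ with $\lambda(f(n))=+1$).

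The engine of the argument is an identity peculiar to monic quadratics. First I would record that for $f(x)=x^2+bx+c$,
\[
f\big(n+f(n)\big)=f(n)\,f(n+1),
\]
which follows by expanding $f(n+f(n))=f(n)+f(n)(2n+b)+f(n)^2=f(n)\big(f(n)+2n+b+1\big)$ and recognizing the bracket as $f(n+1)=f(n)+2n+b+1$. This is exactly where $a=1$ is essential: for a general leading coefficient the bracket is $af(n)+2an+b+1$, which is not $f(n+1)$. By complete multiplicativity of $\lambda$ the identity yields
\[
\lambda\big(f(n+f(n))\big)=\lambda\big(f(n)\big)\,\lambda\big(f(n+1)\big).
\]

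Next I would check that all three arguments stay in the admissible range. For $n\ge A_0$ the value $f(n)$ is positive, since $A_0$ exceeds the larger real root $\tfrac{-b+\sqrt{D}}{2}$ of $f$ (a routine estimate using $\sqrt{|D|}\le (|D|+1)/2$); consequently $n+1\ge A_0$ and $n+f(n)>n\ge A_0$ as well. Thus, under the standing assumption $\lambda(f(m))=-1$ for all $m\ge A_0$, evaluating the displayed relation at $n=n_0$ forces
\[
-1=\lambda\big(f(n_0+f(n_0))\big)=\lambda\big(f(n_0)\big)\,\lambda\big(f(n_0+1)\big)=(-1)(-1)=+1,
\]
a contradiction. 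Hence the sequence $\{\lambda(f(n))\}_{n=A_0}^\infty$ is not constant, and Theorem \ref{thm 2.3} then gives that it changes sign infinitely often.

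I expect the genuine crux to be \emph{guessing the substitution}: once one thinks of evaluating $f$ at $n+f(n)$, the whole proof collapses to the one-line identity $f(n+f(n))=f(n)f(n+1)$ together with the observation that $a=1$ is precisely what makes the cofactor equal to $f(n+1)$. The remaining work, namely verifying $f(n)>0$ and hence $n+f(n)\ge A_0$ for $n\ge A_0$ so that all three values are governed by the assumption, is only bookkeeping and should present no real difficulty.
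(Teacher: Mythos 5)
Your proposal is correct and follows essentially the same route as the paper: the identity $f(n+f(n))=f(n)f(n+1)$ for monic quadratics, complete multiplicativity of $\lambda$ to rule out the constant value $-1$, and then Theorem \ref{thm 2.3} to conclude. If anything, your version is slightly more careful than the paper's, since you verify the identity explicitly and check that $f(n)>0$ and $n+f(n)\ge A_0$ for $n\ge A_0$, points the paper's proof passes over silently.
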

\begin{proof}
We observe the following identity
\[
f(n)f(n+1)=f(f(n)+n)
\]
which can be verified directly. Hence we have
\begin{equation}
\label{3.1}
\lambda (f(n)) \lambda (f(n+1)) = \lambda (f(f(n)+n)).
\end{equation}
If $\{ \lambda (f(n)) \}_{n=1}^\infty$ is a constant sequence, then it follows from \eqref{3.1} that
\[
\lambda (f(n)) =+1,\ \ \ \  \mbox{ for all $n=1,2,\ldots $.}
\]
Therefore if there is $n_0 \ge A_0$ such that $\lambda (f(n_0))=-1$, then by Theorem \ref{thm 2.3}, $\{ \lambda (f(n)) \}_{n=1}^\infty$
  changes sign infinitely often. This proves the theorem.
\end{proof}

The proof of Theorem \ref{thm 2.2} shows that the solvability of  the diophantine equation
\begin{equation}
\label{4.0}
X^2 - 4al Y^2 =D
\end{equation}
is critical in solving the problem. In general, there is no simple criterion to determine the solvability of the equation \eqref{4.0} except if we know the central norm of the continued fraction of the irrational number $\sqrt{al}$. For more discussion in this area, we refer the readers to \cite{Mo1}-\cite{Mo4}. The following theorem deals with a special case of $D$ for which we can solve the equation \eqref{4.0}.
\begin{theorem}
Let $f(x)=px^2+bx+c$ with prime number $p$. Suppose the discriminant $D=b^2-4pc$ is a non-zero perfect square.  Then $\lambda(f(n))$ changes sign infinitely often.
\end{theorem}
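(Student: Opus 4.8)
The plan is to reduce this reducible-looking case to the already-established product form, Theorem \ref{cas2}. The point of the perfect-square hypothesis is exactly that it forces $f$ to split into linear factors once the leading coefficient is cleared. Write $D = d^2$ with $d \in \Z$.

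First I would multiply through by $p$ and substitute $y = px$, giving
$$p f(x) = (px)^2 + b(px) + pc = y^2 + by + pc .$$
Since $b^2 - 4pc = d^2$, the roots of $y^2 + by + pc$ are $\tfrac{-b \pm d}{2}$. The one genuine integrality check is a parity argument: from $b^2 - d^2 = 4pc \equiv 0 \pmod 4$ one gets $b \equiv d \pmod 2$, so
$$\beta := \frac{b+d}{2}, \qquad \gamma := \frac{b-d}{2}$$
are integers. This produces the factorization $pf(x) = (px + \beta)(px + \gamma)$, with $\beta + \gamma = b$ and $\beta\gamma = pc$.

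Next, because $p$ is prime and $p \mid \beta\gamma$, it must divide $\beta$ or $\gamma$; say $p \mid \beta$ and write $\beta = p\beta'$. Then a single factor of $p$ cancels and
$$f(x) = (x + \beta')(px + \gamma),$$
which is precisely the shape handled by Theorem \ref{cas2}, with parameters $a = \beta' \in \Z$, $b = p \in \N$, $c = \gamma \in \Z$. The nondegeneracy hypothesis $ab \neq c$ there becomes $p\beta' \neq \gamma$, i.e. $\beta \neq \gamma$, i.e. $d \neq 0$, which holds since $D = d^2 \neq 0$. Theorem \ref{cas2} then yields that $\lambda(f(n))$ changes sign infinitely often, and the case $p \mid \gamma$ is identical by symmetry.

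The only real obstacle I anticipate is the degenerate possibility that $p$ divides \emph{both} $\beta$ and $\gamma$. In that case $\beta = p\beta'$, $\gamma = p\gamma'$, the factorization reads $f(x) = p(x+\beta')(x+\gamma')$, and complete multiplicativity of $\lambda$ gives $\lambda(f(n)) = \lambda(p)\,\lambda\big((n+\beta')(n+\gamma')\big) = -\lambda\big((n+\beta')(n+\gamma')\big)$. Applying Theorem \ref{cas2} to the monic product (parameters $a = \beta'$, $b = 1$, $c = \gamma'$, with $\beta' \neq \gamma'$ again equivalent to $d \neq 0$) shows that factor changes sign infinitely often, and multiplying by the fixed constant $\lambda(p) = -1$ leaves that conclusion intact. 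Thus the sign changes infinitely often in every case. As an alternative route, once the factorization is available one could instead exhibit two explicit arguments $n_1, n_2 \geq A_0$ with opposite-sign values and invoke Theorem \ref{thm 2.3}, but passing through Theorem \ref{cas2} is the cleaner path.
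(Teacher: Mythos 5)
Your proof is correct, and it takes a genuinely different route from the paper's. You exploit the fact that a square discriminant makes $pf(x)$ split over $\Z$: writing $D=d^2$, the parity argument $b\equiv d\pmod 2$ gives integers $\beta=(b+d)/2$ and $\gamma=(b-d)/2$ with $pf(x)=(px+\beta)(px+\gamma)$ and $\beta\gamma=pc$, and primality of $p$ lets you absorb the factor $p$ into one linear term, yielding $f(x)=(x+\beta')(px+\gamma)$; the conclusion then follows from the quoted result of Cassaigne et al.\ (Theorem \ref{cas2}), the hypothesis $ab\neq c$ there being exactly $\beta\neq\gamma$, i.e.\ $D\neq 0$. (Your ``degenerate case'' where $p$ divides both $\beta$ and $\gamma$ needs no separate treatment at all: the factorization $f(x)=(x+\beta')(px+\gamma)$ and Theorem \ref{cas2} apply verbatim there; your extra argument via $\lambda(p)=-1$ is correct but redundant.) The paper argues differently, staying inside its own Pell-equation machinery: it picks positive integers $l_1,l_2$ with $\lambda(l_1)\neq\lambda(l_2)$ and $pl_j$ not a perfect square, scales solutions $(X,Y)$ of $X^2-4pl_jY^2=1$ by $q=\sqrt{D}$ to get solutions of $X^2-4pl_jY^2=D$, uses primality of $p$ to choose the sign in $n=(-b\pm qX)/(2p)$ so that $n\in\N$, thereby producing solutions of $f(n)=l_jm^2$ for both $j$, and finally invokes Theorem \ref{thm 2.2} to convert one solution of each equation into infinitely many, hence infinitely many sign changes. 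The comparison: your reduction is shorter and conceptually transparent --- a square discriminant is precisely the reducible case, so it lands in the already-solved product form --- but it outsources all the analytic content to the external theorem from \cite{Cas}; the paper's proof is longer but is designed to showcase its own main result (Theorem \ref{thm 2.2}), and its construction is self-contained within the paper apart from classical facts about Pell equations.
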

\begin{proof}
We first choose positive integers $l_1$ and $l_2$ such that $p\, l_1$ and $p\, l_2$ are not perfect squares and
$\lambda (l_1) \neq \lambda (l_2)$. So the Pell equations
\begin{equation}
\label{3.2}
X^2 -4p\, l_j Y^2=1, \ \ \ \ j=1,2
\end{equation}
have infinitely many positive solutions $(X,Y)$. Let $D=q^2$ with $q \ge 1$. Then any positive solution $(X,Y)$ of \eqref{3.2} gives a positive solution $(qX,qY)$ of
\[
X^2 -4p\, l_jY^2=D.
\]
We can choose $X$ large enough so that $-b +qX >0$. On the other hand, we have $X^2\equiv 1 \pmod p$ by
\eqref{3.2} and $q^2 \equiv b^2 \pmod p$ because $D=b^2-4pc$. Therefore $(qX)^2\equiv b^2 \pmod p$. Since $p$
is a prime, so either (a) $qX \equiv b \pmod p$ or (b)
$qX \equiv -b \pmod p$.
We define
\[
n=\frac{-b \pm qX}{2p}
\]
where the sign $\pm$ is determined according to Cases (a) or (b) so that $n$ is a positive integer. Therefore $(n,qX)$ is a positive solution of the equations $f(n)=l_jm^2$. Then our theorem follows readily from Theorem \ref{thm 2.2}.
\end{proof}

\end{document}